\newtheorem{dfn}{Definition}
\newtheorem{prop}{Proposition}
\newtheorem{theorem}{Theorem}
\newcommand{\M}{\mathcal{M}}
\newcommand{\FM}{\mathcal{F}^{\mathcal{M}}}
\newcommand{\stratum}{\mathcal{H}(\kappa)}
\newcommand{\proba}{\mathcal{P}(\M_1)}
\title{Isoperiodic dynamics in rank 1 affine invariant orbifolds}
\author{Florent Ygouf}
\begin{document}

\maketitle

\abstract{Let $\M$ be a rank 1 affine invariant orbifold in a stratum of the moduli space of flat surfaces. We show that the leaves of the $\M$-isoperiodic foliation are either all closed or all dense. In the second case, we establish ergodicity of the foliation with respect to the affine measure on $\M$.}

\section{Introduction}

\subsection{Context}

Any stratum $\stratum$ of the moduli space of translation surfaces is endowed with a holomorphic foliation $\mathcal{F}$ called the isoperiodic foliation, also known in the literature as the Kernel foliation, the absolute period foliation or the Rel foliation. The leaves of this foliation are locally described by modifiying the relative position of the singularities of a translation surface while keeping its (absolute) periods unchanged. Several papers have been dedicated to the dynamics of its leaves. McMullen has established the ergodicity of the foliation with respect to the Masur-Veech measure in the principal strata of genus 2 and 3, see \cite{mcmullen2014moduli}. Building upon this work, Calsamiglia, Deroin and Francaviglia have obtained in \cite{calsamiglia2015transfer} a classification of the closed saturated sets of the foliation in the principal stratum in any genus and the ergodicity with respect to the Masur-Veech measure. The ergodicity part in the principal strata has also been obtained independently by Hamenstädt in \cite{hamenstadt2018ergodicity} with different methods. Outside the principal strata, Hooper and Weiss gave an example of a dense leaf in $\mathcal{H}^{odd}(g-1,g-1)$ for abitrary $g \geq 3$ in \cite{hooper2015rel} and it is shown in \cite{ygouf2020criterion} that the leaves of non primitive Prym eigenforms in $\mathcal{H}(2,1,1)$ are dense.

\subsection{Statement of the results}

In this text we study the dynamics of a variation of the isoperiodic foliation in affine invariant orbifolds \footnote{In the literature, they are usually referred to as affine invariant manifolds but they are only suborbifolds, hence our choice of terminology.} of the moduli space. Those are suborbifolds that arise as $GL_2(\mathbb{R})$-orbit closures as has been established in the seminal work of McMullen in genus 2 and Eskin, Mirzakhani and Mohammadi in complete generality. See \cite{wright2014translation} for more details. If $\M$ is such an affine invariant orbifold, it is explained in \cite{ygouf2020criterion} how isoperiodic deformations inside $\M$ fit into a holomorphic foliation of $\M$. The following definition is a summary of section 2.2 of \cite{ygouf2020criterion}

\begin{dfn}
Let $\M$ be an affine invariant orbifold. The unit area sublocus $\M_1$ is endowed with a (immersed) foliation $\FM$ called the $\M$-isoperiodic foliation such that the leaf $\FM_x$ of any $x \in \M_1$ is a connected component of $\mathcal{F}_x \cap \M$.  
\end{dfn}

We emphasize that in \cite{ygouf2020criterion} the $\M$-isoperiodic foliation was defined as a foliation of the whole affine invariant orbifold. We adopt another convention here and consider its restriction to $\M_1$. This is licit as isoperiodic deformations preserve area. This is explained in section 2.2 of \cite{ygouf2020criterion}. The isoperiodic foliation $\mathcal{F}$ of the stratum $\mathcal{H}(\kappa)$ corresponds to the case where $\M = \stratum$. The rank 1 case is analyzed in \cite{ygouf2020criterion} and a criterion for density of isoperiodic leaves in that case is established. An application of that criterion provides a dichotomy for the topological behavior of the leaves in the case where $\M$ is a connected component of a Prym eigenform locus in genus 2 or 3: the leaves are all dense or all closed. In this text we generalize this result to abitrary rank 1 affine invariant orbifolds.

\begin{theorem}\label{main}
Let $\M$ be a rank 1 affine invariant manifold. Either all the leaves of the $\M$-isoperiodic foliation are closed or all the leaves are dense in $\M_1$. In the second case, the foliation is ergodic with respect to the affine measure. 
\end{theorem}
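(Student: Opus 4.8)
The plan is to deduce the theorem from the density criterion of \cite{ygouf2020criterion} for rank~$1$ orbifolds, supplemented by two inputs special to rank~$1$: an irreducibility property of the ``rel'' part of $T\M$, and ergodicity of the $SL_2(\mathbb R)$-action on $(\M_1,\measure)$. First I would recall from \cite{ygouf2020criterion} the description of $\FM$ in period coordinates: near $x=(X,\omega)$ the leaf $\FM_x$ is modelled on the rel subspace $W=W_x:=\ker(p)\cap T_x\M\subseteq H^1(X,\Sigma;\mathbb C)$, $p$ the projection to absolute cohomology, and the shape of $\FM_x$ is governed by the position of a subgroup $\Gamma_x\subseteq W$ attached to $x$: $\FM_x$ is closed in $\M_1$ when $\overline{\Gamma_x}$ is discrete and dense when $\overline{\Gamma_x}=W$, and I expect the criterion of \cite{ygouf2020criterion} to characterise density in exactly these terms. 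The rank~$1$ hypothesis then yields a dichotomy already at the level of a single leaf: $W$ carries a module structure over the trace field $k$ of $\M$, the closed subgroup $\overline{\Gamma_x}$ is a $k$-submodule, and rank~$1$ forces the relevant $k$-module to be irreducible, so $\overline{\Gamma_x}$ is $\{0\}$, a lattice, or all of $W$; in the first two cases $\FM_x$ is closed (a properly embedded $\mathbb R^{2d}$, $2d=\dim_{\mathbb C}\M-2$, respectively a torus), in the third it is dense.

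Second, I would make the dichotomy uniform in $x$. The set $U=\{x\in\M_1:\FM_x\text{ dense}\}$ is $\FM$-saturated, measurable and $SL_2(\mathbb R)$-invariant, since $SL_2(\mathbb R)$ permutes the leaves of $\FM$ and carries $\Gamma_x$ to $\Gamma_{gx}$ under $W_{gx}=g\cdot W_x$; by ergodicity of $\measure$ under $SL_2(\mathbb R)$ --- valid on every affine invariant orbifold --- $U$ has measure $0$ or $1$. To upgrade this to $U=\M_1$ or $U=\emptyset$ I would use that discreteness of $\overline{\Gamma_x}$ is, through the $k$-module structure and Wright's theorem that $\M$ (hence the rel local system, its monodromy and the $\Gamma_x$) is defined over a number field, an arithmetic condition attached to $\M$ and not to the point $x$, so it holds for every $x$ or for none; a complementary input is that if some $\FM_{x_0}$ is not dense then $\overline{SL_2(\mathbb R)\cdot\overline{\FM_{x_0}}}$ is a closed $SL_2(\mathbb R)$-invariant $\FM$-saturated set whose top affine piece, by Eskin--Mirzakhani--Mohammadi, contains near each point a full $\FM$-leaf and is therefore of full relative dimension, so rank~$1$ forces it to be $\M$. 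Together with the first step this gives: all leaves of $\FM$ are closed, or all are dense in $\M_1$.

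Third, ergodicity in the dense case. I would disintegrate $\measure$ along $\FM$: in a foliated chart $\M_1\cong T\times P$ one has $\measure\cong\nu\otimes\mathrm{Leb}_W$ for a transverse measure $\nu$, the leafwise factor being translation-invariant because the leaves are homogeneous under $W$; an $\FM$-invariant measurable set then descends to a $\nu$-measurable subset of $T$ invariant under the holonomy pseudogroup, so $\FM$ is $\measure$-ergodic iff that pseudogroup is $\nu$-ergodic. Using the fibration (up to a finite cover) of $\M_1$ over the Teichmüller curve $C=SL_2(\mathbb R)/\Delta$ underlying $\M$, with fibre the rel torus $W/\Lambda$, the pseudogroup is generated by the $SL_2(\mathbb R)$-action on $C$ and by the monodromy $\Delta\to\operatorname{Aff}(W/\Lambda)$ of the rel connection, whose translation parts are the $\Gamma_x$; in the dense case these form a dense subgroup of $W/\Lambda$, which acts ergodically on the torus, while $SL_2(\mathbb R)$ acts ergodically on the finite-volume space $C$, and a Fubini/Mautner-type argument combining the two transverse directions yields ergodicity.

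The hard part, in my view, is the rank~$1$ input shared by the first two steps: that $W$, as a $k$-module carrying the monodromy of $\M$, has no proper nonzero invariant sub-object, and that the resulting trichotomy for $\overline{\Gamma_x}$ is insensitive to $x$. This fails in higher rank and must be extracted from the structure theory of rank~$1$ affine invariant orbifolds --- the restrictions on $GL_2(\mathbb R)$-orbit closures of rank $1$, cylinder deformations in the sense of Wright, and the field of definition. By contrast the $SL_2(\mathbb R)$-saturation argument, the ergodicity of $\measure$, the passage to a finite cover, and the disintegration of $\measure$ should be comparatively routine once that irreducibility statement is in hand.
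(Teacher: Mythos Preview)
Your proposal follows a fundamentally different route from the paper, and it has a real gap at its core. Everything hinges on the irreducibility of the rel subspace $W$ as a $k$-module (or under monodromy), which you yourself flag as ``the hard part'' and for which you give no argument. Rank~$1$ constrains the \emph{absolute} part of $T\M$ to be two-dimensional but places no direct restriction on $W$, which can have arbitrary dimension; there is no evident reason for it to be irreducible, and without this your trichotomy for $\overline{\Gamma_x}$ collapses, taking Steps~1 and~2 with it. Your ``complementary input'' in Step~2 is moreover vacuous once one uses the actual rank-$1$ content of \cite{ygouf2020criterion}, namely that $G=SL_2(\mathbb R)$ acts \emph{transitively} on the set of leaves (Proposition~\ref{transitive} here): then $G\cdot\FM_{x_0}=\M_1$ for every $x_0$, so the Eskin--Mirzakhani--Mohammadi closure $\overline{G\cdot\overline{\FM_{x_0}}}=\M_1$ carries no information. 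Likewise, the fibration picture of Step~3 over a Teichm\"uller curve $G/\Delta$ presupposes that the leaf space is already a Hausdorff homogeneous space with discrete stabilizer --- precisely one of the alternatives you are trying to establish.

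The paper bypasses all of this structure theory. It uses only the transitivity of $G$ on leaves and then builds a measure-theoretic surrogate for the leaf space: disintegrate the affine measure $m$ over the $\sigma$-algebra $\mathcal{A}_{sat}$ of saturated Borel sets to obtain a $G$-equivariant (after a null-set correction) map $x\mapsto m_x\in\proba$ whose image lies in a single $G$-orbit of full pushforward measure. The stabilizer $H\subset G$ of a point on this orbit is then closed with finite covolume (the pushforward is a $G$-invariant probability on $G/H$), so Borel density forces $H$ to be a lattice or $G$ itself. Since $H$ contains the stabilizer of a leaf, the lattice case makes leaf-stabilizers discrete and the topological leaf space Hausdorff, so all leaves are closed; the case $H=G$ makes the pushforward a Dirac mass, which gives ergodicity of $\FM$ directly, hence $m$-almost every leaf is dense, hence (by transitivity again) every leaf is dense. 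No irreducibility of $W$, no field-of-definition input, no Eskin--Mirzakhani--Mohammadi, no fibration --- just conditional measures and Borel density.
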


By affine measure we mean the only $SL_2(\mathbb{R})$-invariant probability measure supported on $\M_1$. See \cite{eskin2018invariant} for more details. The foliation is said to be ergodic with respect to the affine measure if any saturated Borel set is either null or conull for that measure. We recall that a set is said to be saturated if it contains all the leaves it intersects.

\subsection{Outline of the proof}

The proof of Theorem \ref{main} relies on the fact the action of $G=SL_2(\mathbb{R})$ on $\M_1$ permutes the leaves of the $\M$-isoperiodic foliation and acts transitively on the set of leaves. This set of leaves is thus isomorphic to a quotient of $G$ and carries a finite measure inherited from the affine measure. However, the measurable structure of that space is in general too pathological to be of any use and we introduce instead a measure-theoretic leaf space (that is incarnated by the space of probability measures on $\M_1$). We study the induced $G$-action and establish that stabilizers are now closed and have finite covolume in $G$. This is an avatar of the nicer measurable structure of our measure-theoretic leaf space. In vertue of Borel density theorem, stabilizers are either lattices of $G$ or $G$ itslef. This dichotomy underlies the dichotomy of Theorem \ref{main}.  

\subsection{Acknowledgements}

I am grateful to Erwan Lanneau for discussing with me this problem and other related questions as well as to Jean-Francois Quint for suggesting a shortcut in the proof of the main result. This text has greatly benefited from their insightful comments. This work has been partially supported by the LabEx PERSYVAL-Lab (ANR-11-LABX-0025-01) funded by the French program Investissement d’avenir.

\section{A measure theoretic leaf space}

Let $\M$ be a rank 1 affine invariant orbifold contained in $\stratum$ and denote by $m$ the corresponding affine measure. The space $\M_1$ is endowed with its Borel $\sigma$-algebra $\mathcal{A}$. We recall from \cite{ygouf2020criterion} that the action of $G$ on $\M_1$ plays out nicely with the $\M$-isoperiodic foliation. 

\begin{prop}\label{transitive}
The action of $G$ on $\M_1$ permutes the leaves of $\FM$ and acts transitively on the set of leaves. 
\end{prop}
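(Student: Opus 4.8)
The plan is to read everything in period coordinates. Near any $x\in\M_1$ the stratum $\stratum$ has local coordinates given by the period map with values in $H^1(S,\Sigma;\mathbb{C})$; the group $G$ acts on these coordinates through the standard representation of $SL_2(\mathbb{R})$ on each factor $\mathbb{C}\simeq\mathbb{R}^2$, the restriction map $p\colon H^1(S,\Sigma;\mathbb{C})\to H^1(S;\mathbb{C})$ onto absolute cohomology is $G$-equivariant, and the local plaques of $\mathcal{F}$ are exactly the level sets of $p$ in these coordinates. Since $\M$ is $G$-invariant and $G$ preserves the area form, every $g\in G$ restricts to a homeomorphism of $\M_1$ that carries each local plaque of $\FM$ — a connected piece of a level set of the absolute periods inside $\M$ — to another such plaque, namely the plaque through $gz$. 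As a homeomorphism it sends maximal connected unions of plaques to maximal connected unions of plaques, so it maps each leaf of $\FM$ onto a leaf of $\FM$; more precisely $g\cdot\FM_z=\FM_{gz}$, which is the first assertion.

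For transitivity I would use the rank $1$ hypothesis through the structure theory of affine invariant orbifolds (see \cite{wright2014translation}): the image $p(T_z\M)$ is a $G$-invariant, flat, symplectic subbundle of $H^1(S;\mathbb{C})$ of the form $\mathbb{C}\otimes V$ with $V\subset H^1(S;\mathbb{R})$, and $\M$ having rank $1$ means exactly $\dim_{\mathbb{R}}V=2$. Hence the absolute period class $[\omega]$ of a surface in $\M$ lies in $\mathbb{C}\otimes V$, and its real and imaginary parts form a basis of the two-dimensional space $V$ — they are linearly independent because the area $\langle\mathrm{Re}[\omega],\mathrm{Im}[\omega]\rangle$ is positive — which on $\M_1$ is normalized by $\langle\mathrm{Re}[\omega],\mathrm{Im}[\omega]\rangle=1$. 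The $G$-action on this datum is the tautological one, $\begin{pmatrix}a&b\\c&d\end{pmatrix}\cdot(\mathrm{Re}[\omega],\mathrm{Im}[\omega])=(a\,\mathrm{Re}[\omega]+b\,\mathrm{Im}[\omega],\,c\,\mathrm{Re}[\omega]+d\,\mathrm{Im}[\omega])$, and for any two normalized symplectic bases $(u_1,u_2)$ and $(w_1,w_2)$ of $V$ there is a unique $g\in SL_2(\mathbb{R})$ taking the first to the second: writing $w_1=au_1+bu_2$ and $w_2=cu_1+du_2$, the normalization forces $ad-bc=\langle w_1,w_2\rangle=1$, so $g=\begin{pmatrix}a&b\\c&d\end{pmatrix}$ works and is visibly the only possibility. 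Therefore, given $x,y\in\M_1$, after identifying the fibres of the flat bundle $V$ over $x$ and $y$ along a path, there is $g\in G$ such that $gx$ and $y$ have the same absolute periods, i.e. lie in a common isoperiodic fibre $\mathcal{F}_y\cap\M$.

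The remaining — and, I expect, most delicate — input is that $gx$ and $y$ in fact lie on the \emph{same} leaf and not merely in the same fibre; equivalently, that the isoperiodic fibres $\mathcal{F}_z\cap\M$ of a rank $1$ affine invariant orbifold are connected inside $\M_1$. I would extract this from the local analysis of $\FM$ carried out in \cite{ygouf2020criterion}: in a rank $1$ orbifold the relative-period directions spanning $T\M\cap\ker p$ are complementary to the $G$-directions and, over each fibre of the absolute period map, integrate to a single leaf, so a fibre is already one leaf of $\FM$. Granting this, $\FM_{gx}=\FM_y$, and since $\FM_{gx}=g\cdot\FM_x$ by the first part, $G$ acts transitively on the set of leaves of $\FM$, which proves Proposition \ref{transitive}.
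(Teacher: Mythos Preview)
The paper does not prove this proposition here; it is recalled from \cite{ygouf2020criterion}. Your argument for the permutation claim --- $G$ acts linearly on each $\mathbb{C}\simeq\mathbb{R}^2$ in period coordinates and hence commutes with the projection $p$ to absolute cohomology, so it sends plaques to plaques and therefore leaves to leaves --- is correct and standard.

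For transitivity there is a genuine gap. Matching absolute periods of $x$ and $y$ by transporting the two-plane $V$ along a path $\gamma$ and then picking $g\in SL_2(\mathbb{R})$ produces a point $gx$ that bears no a priori relation to $\gamma$, so the assertion ``$gx$ and $y$ lie in the same isoperiodic fibre'' has no global meaning --- it is only well-posed inside a single period chart. You then invoke connectivity of the isoperiodic fibres in $\M$, but this is not an independent lemma you can import; your justification (``the rel directions integrate to a single leaf over each fibre'') is precisely the statement at stake, so the appeal is circular.

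You already have the right ingredient in hand: in rank $1$ the $G$-directions are transverse to the leaf with complementary dimension, $T_x\M_1=T_x(G\cdot x)\oplus\bigl(T_x\M\cap\ker p\bigr)$. Use it directly. It shows that $G\cdot\FM_x$ is open in $\M_1$; since $G$ permutes leaves these sets partition $\M_1$, so each is also closed, and connectedness of $\M_1$ gives $G\cdot\FM_x=\M_1$. This is the argument behind the citation, and it requires neither a global comparison of periods nor any fibre-connectivity input.
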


Let $\proba$ be the set of probability measures on $\M_1$ endowed with the $\sigma$-algebra generated by the evaluation maps $\nu \mapsto \nu(A)$ where $A$ runs over the Borel sets. This is a standard Borel space. See section 17.E for more details. Let $\mathcal{A}_{sat}$ be the $\sigma$-algebra comprised of those Borel sets that are saturated by the foliation. We denote by $(m_x)_{x \in \M_1}$ the associated conditional measures. See section 5 of \cite{einsiedler2013ergodic} for more details on conditional measures. We define the following map: 

$$
\pi:
\begin{matrix}
\M_1 &\to& \proba \\
x &\mapsto& m_x
\end{matrix}
$$

The map $\pi$ is $\mathcal{A}_{sat}$-measurable by standard considerations on conditional measures. It is for instance item 1 of theorem $5.14$ \footnote{More precisely, item 1 of theorem 5.14 in \cite{einsiedler2013ergodic} states that only the restriction $\pi$ to the complement of a conull set of $\mathcal{A}_{sat}$ is $\mathcal{A}_{sat}$-measurable. We can then promote this map to a $\mathcal{A}_{sat}$ measurable map defined on $\M_1$ by assigning a constant value on the complement of the aforementionnend conull saturated set.} in \cite{einsiedler2013ergodic}. In particular, this means that $\pi$ is constant on the leaves of the $\M$-isoperiodic foliation as singletons in $\proba$ are measurable sets. This fact will be used several times later in the text. The following propostion justifies the relevance of the probability space $\proba$ in our study of the $\M$-isoperiodic foliation.  It essentially says that this space is the biggest quotient of $\M$ with a nice measurable structure that identifies points in the same leaves. 

\begin{prop}\label{factors}
For any standard Borel space $X$ together with a measurable map $f:\M_1 \to X$ that is constant on the leaves of the foliation, there is measurable map $\mathfrak{f}: \proba \to X$ such that $f = \mathfrak{f} \circ \pi$ $m$-almost everywhere. 
\end{prop}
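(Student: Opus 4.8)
The plan is to exploit the defining property of conditional measures: the map $\pi$ realizes the $\sigma$-algebra $\mathcal{A}_{sat}$, in the sense that a bounded function is $\mathcal{A}_{sat}$-measurable (mod $m$) if and only if it factors through $\pi$. Concretely, I would first reduce to the case $X = \mathbb{R}$ (or $X = [0,1]$): since $X$ is standard Borel, it embeds measurably into $[0,1]$, so it suffices to factor a bounded measurable real-valued function and then note that the resulting $\mathfrak{f}$ lands in the image of the embedding $m$-a.e., which is Borel. So fix a bounded measurable $f : \M_1 \to [0,1]$ that is constant on leaves.

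The key step is to show that $f$ is $\mathcal{A}_{sat}$-measurable modulo $m$. Here I would use the characterization of conditional measures via the conditional expectation: by Theorem 5.14 of \cite{einsiedler2013ergodic}, for every bounded measurable $g$ the function $x \mapsto \int g \, dm_x = (g \circ \, )$ integrated against $\pi(x)$ equals $\mathbb{E}(g \mid \mathcal{A}_{sat})(x)$ for $m$-a.e.\ $x$. Apply this with $g = f$. Because $f$ is constant on each leaf and each fiber $\pi^{-1}(\pi(x))$ is (up to $m$-null sets) contained in a single leaf — this is where one uses that $m_x$ is supported on the leaf through $x$, which is part of the conditional measure machinery for the foliation — the inner integral $\int f \, dm_x$ equals $f(x)$ for $m$-a.e.\ $x$. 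Hence $f = \mathbb{E}(f \mid \mathcal{A}_{sat})$ $m$-a.e., so $f$ agrees $m$-a.e.\ with an $\mathcal{A}_{sat}$-measurable function $\tilde f$.

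Once $f$ is $\mathcal{A}_{sat}$-measurable mod $m$, I would invoke the Doob–Dynkin lemma relative to $\pi$: since $\pi$ is $\mathcal{A}_{sat}$-measurable and, conversely, generates $\mathcal{A}_{sat}$ modulo $m$-null sets (again a standard feature of the conditional-measure construction — the $\sigma$-algebra pulled back by $\pi$ from the Borel structure of $\proba$ coincides $m$-a.e.\ with $\mathcal{A}_{sat}$), there is a Borel function $\mathfrak{f} : \proba \to [0,1]$ with $\tilde f = \mathfrak{f} \circ \pi$ everywhere, hence $f = \mathfrak{f} \circ \pi$ $m$-a.e. Undoing the reduction to $[0,1]$ by composing with a Borel left-inverse of the embedding $X \hookrightarrow [0,1]$ (defined arbitrarily off the image) yields the desired $\mathfrak{f} : \proba \to X$.

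The main obstacle I anticipate is the precise statement that $\pi$ \emph{generates} $\mathcal{A}_{sat}$ modulo $m$, rather than merely being $\mathcal{A}_{sat}$-measurable: one needs that two points with the same conditional measure are, $m$-almost surely, in the same saturated sets, equivalently that the $\mathcal{A}_{sat}$-atoms are $m$-a.e.\ exactly the fibers of $\pi$. This is the content of the disintegration theorem for countably generated sub-$\sigma$-algebras on standard Borel spaces, but pinning down that $\mathcal{A}_{sat}$ is countably generated mod $m$ (it is, because $\M_1$ is second countable and saturations of a countable basis separate the atoms) is the step requiring the most care; everything else is a formal consequence of the conditional-measure formalism already cited.
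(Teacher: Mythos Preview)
Your outline is workable but takes an unnecessary detour, and the obstacle you flag is one the paper's proof simply sidesteps. The paper also reduces to $X=[0,1]$, but then it \emph{writes down} the factor map explicitly: $\mathfrak{f}(\nu)=\int_{\M_1} f\,d\nu$, which is defined and Borel on all of $\proba$. With this choice one has $\mathfrak{f}\circ\pi(x)=\int f\,dm_x = E[f\mid\mathcal{A}_{sat}](x)$ for $m$-a.e.\ $x$ by the defining property of conditional measures, and since $f$ is constant on leaves it is genuinely $\mathcal{A}_{sat}$-measurable (preimages of Borel sets are saturated), so $E[f\mid\mathcal{A}_{sat}]=f$ $m$-a.e. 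That is the whole argument; no Doob--Dynkin step, and no need to know that $\pi$ generates $\mathcal{A}_{sat}$ mod $m$ or that $\mathcal{A}_{sat}$ is countably generated. Two further remarks on your version: (i) your justification that $\int f\,dm_x=f(x)$ via ``$m_x$ is supported on the leaf through $x$'' is more than you need and is itself not automatic---the direct route is exactly the $\mathcal{A}_{sat}$-measurability of $f$ just mentioned; (ii) the Doob--Dynkin step you propose would indeed require showing that the $\sigma$-algebra pulled back by $\pi$ equals $\mathcal{A}_{sat}$ modulo $m$, which, while true via Rokhlin-type disintegration arguments, is precisely the extra work the explicit formula for $\mathfrak{f}$ eliminates.
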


\begin{proof}
We treat first the case where $X = [0,1]$. In that case $f$ is integrable with respect to any probability measure on $\M_1$ and we define $\mathfrak{f}(\nu) = \int_{\M_1} f d \nu$. By construction of the conditional measures (see item 1 of theorem 5.14 in \cite{einsiedler2013ergodic}) we have for $m$-almost every $x \in \M_1$: 

$$
\mathfrak{f} \circ \pi(x) = \int_{\M_1} f \ dm_x = E[f | \mathcal{A}_{sat}](x)
$$ 

Now, $f$ is $\mathcal{A}_{sat}$-measurable as it is constant on the leaves of the foliation and we deduce that $f$ is equal $m$-almost everywhere to its conditional expectation. Consequently, we proved that $f$ is equal to  $\mathfrak{f}\circ \pi$ $m$-almost everywhere.  

\medskip 

In the general case, since $X$ is a standard Borel space, there is measurable isomorphism $\phi$ from $X$ to a Borel subset of $[0,1]$. See for instance proposition A1 in the appendix of \cite{zimmer2013ergodic} for a proof of that claim. We can thus apply the previous case to the map $\phi \circ f$ to get a map $\mathfrak{h}$ and we set $\mathfrak{f} = \phi^{-1}\circ \mathfrak{h}$. 
\end{proof}

In the remainder of the text, we consider the induced action of $G$ on the space $\proba$ by pushforward. It is a measurable action and it preserves the measure $\mu = \pi_{\ast}m$. We denote by $m_G$ the Haar measure on $G$.   

\begin{prop}\label{equivariant}
There is a $G$-equivariant $\mathcal{A}_{sat}$-measurable map $\pi': \M_1 \to \proba$ that agrees with $\pi$ $m$-almost everywhere.
\end{prop}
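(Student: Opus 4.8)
The plan is to fix the almost-everywhere ambiguity in $\pi$ by hand, using $G$-equivariance along the orbits to transport the canonical value. First I would record the basic equivariance property that $\pi$ enjoys up to null sets: for each $g \in G$, the map $x \mapsto g_*\pi(g^{-1}x)$ is again a version of the conditional expectation with respect to $\mathcal{A}_{sat}$, because the $G$-action permutes the leaves (Proposition \ref{transitive}) and hence preserves $\mathcal{A}_{sat}$, while the affine measure $m$ is $G$-invariant. Uniqueness of conditional measures then gives that for each fixed $g$, one has $\pi(gx) = g_*\pi(x)$ for $m$-almost every $x$. The difficulty is that the null set depends on $g$, so this does not immediately produce a genuinely equivariant representative.

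To upgrade this to an everywhere-defined equivariant map, I would invoke a Fubini argument on $G \times \M_1$. Let $E = \{(g,x) : \pi(gx) \neq g_*\pi(x)\}$; this is a measurable subset of $G \times \M_1$ (using that the $G$-action on $\M_1$ and on $\proba$ is measurable and that the diagonal in the standard Borel space $\proba$ is measurable), and by the previous paragraph each slice $E_g = \{x : (g,x) \in E\}$ is $m$-null. By Fubini, $(m_G \otimes m)(E) = 0$, so for $m$-almost every $x$ the slice $E^x = \{g : \pi(gx) \neq g_*\pi(x)\}$ is $m_G$-null. Let $X_0$ be the conull set of such good $x$; note $X_0$ can be taken $G$-invariant after intersecting over a countable dense situation or, more cleanly, by replacing it with $\{x : m_G(E^x) = 0\}$, which is $G$-invariant since $E^{gx}$ and $E^x$ differ by the left translation $h \mapsto hg^{-1}$, a map preserving the Haar measure class. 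On $X_0$ I then define $\pi'(x)$ to be the $m_G$-essential value of $g \mapsto g^{-1}_*\pi(gx)$: for $m$-almost every $x$ this function is $m_G$-almost everywhere equal to the constant $\pi(x)$, so $\pi'(x) = \pi(x)$ there, and the construction makes $\pi'$ manifestly $G$-equivariant on $X_0$. Off $X_0$, set $\pi'$ equal to a fixed $G$-fixed point of $\proba$ if one exists, or more safely extend $X_0$ to be all of $\M_1$ by absorbing the null complement into a single additional leaf's worth of arbitrary but equivariant choice; since $m$ is $G$-quasi-invariant the complement is a $G$-invariant null set and any measurable equivariant assignment there (e.g. $x \mapsto \delta$-type measures along orbits) completes the definition.

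The measurability and $\mathcal{A}_{sat}$-measurability of $\pi'$ follow from the fact that it agrees with $\pi$ on a conull set and that $\pi$ is $\mathcal{A}_{sat}$-measurable: more precisely, since $\pi'$ is constant on leaves (being equivariant and agreeing $m$-a.e.\ with the leafwise-constant map $\pi$, one checks leafwise constancy holds everywhere on $X_0$ directly from the definition via the essential value, as $gx$ and $x$ lie on the same leaf forces $g_*\pi'(x)=\pi'(x)$ only when $g$ stabilizes the leaf — here one uses that the conditional measures are leafwise, so the essential value genuinely depends only on the leaf), its preimages of Borel sets are saturated; combined with plain Borel measurability this gives $\mathcal{A}_{sat}$-measurability.

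The main obstacle I anticipate is the bookkeeping in the Fubini step: one must verify that $E$ is genuinely measurable in the product (which needs the $\proba$-valued maps to be jointly measurable, relying on the standard Borel structure on $\proba$ and measurability of the two $G$-actions), and one must arrange the good set $X_0$ to be exactly $G$-invariant rather than merely conull, so that the equivariance of $\pi'$ is not just an $m$-almost-everywhere statement but holds on the nose on its domain. Once those two points are handled, defining $\pi'$ as the Haar-essential value of $g \mapsto g^{-1}_*\pi(gx)$ does all the work for free.
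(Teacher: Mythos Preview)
Your approach is essentially the paper's: both use uniqueness of conditional measures to get $g_*\pi(x)=\pi(gx)$ $m$-a.e.\ for each fixed $g$, then Fubini to pass to the set of $x$ where $g\mapsto g^{-1}_*\pi(gx)$ is $m_G$-essentially constant, and define $\pi'$ as that essential value.

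The one place where you diverge is the handling of the complement of the good set $X_0$, and here you miss the observation that makes the paper's argument clean. You correctly note that $X_0$ is $G$-invariant (Haar class is left- and right-invariant), but you do not observe that $X_0$ is also \emph{saturated}: since $\pi$ is constant on leaves and $G$ permutes leaves, the slice $E^x$ depends only on the leaf of $x$. A nonempty $G$-invariant saturated set, together with the transitivity of $G$ on the set of leaves (Proposition~\ref{transitive}), forces $X_0=\M_1$. This removes entirely the need for your ad hoc extension (``a fixed $G$-fixed point of $\proba$ if one exists'', or the vaguer alternative), which as written is the weakest part of your argument. Incidentally, such a fixed point does exist---it is $m$ itself---so your first suggestion can be made to work; but recognizing $X_0=\M_1$ is both simpler and what the paper does.
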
 

\begin{proof}
Since $m$ is $G$-invariant, unicity of conditional measures (see item 3 of theorem 5.14 in \cite{einsiedler2013ergodic}) implies that for any $g \in G$, the equality $gm_x = m_{gx}$ holds $m$-almost everywhere. Let $B$ be the set $\{x \in \M_1 \ | \ g \mapsto g^{-1}\pi(gx) \ \mathrm{is \ constant} \ m_G \mathrm{-almost \ everywhere}\}$. It is a consequence of Fubini's theorem that $B$ is measurable and conull. For $g,h \in G$ and any $x \in B$, we have: 

$$
g^{-1}\pi(ghx) = h\left((gh)^{-1}\pi(ghx) \right)
$$

\noindent This equality implies that $B$ is $G$-invariant. Since $G$ permutes the leaves of the $\M$-isoperiodic foliation and that $\pi$ is constant on those leaves, $B$ is also saturated and we deduce from proposition \ref{transitive} that $B$ is actually equal to $\M_1$. Now, for any $x \in \M_1$, we define $\pi'(x)$ to be the essential value of $g \mapsto g^{-1}\pi(gx)$. From the previous relation, it follows that $\pi'$ is $G$-equivariant. If $m_G'$ is any probability measure in the class of the Haar measure of $G$, we have the following: 

$$
\pi'(x) = \int_G g^{-1}\pi(gx) \ dm_G'
$$

This shows that $\pi'$ is measurable and constant on the leaves. Another application of Fubini's theorem shows that $\pi$ and $\pi'$ agree almost everywhere. 

\end{proof}

When a standard probability space $(\mathfrak{M},\mathfrak{m})$ is endowed with a measurable action of a group $\mathfrak{G}$, it is a standard fact that orbits are measurable sets. A proof of that claim can be found in corollary 2.1.20 of \cite{zimmer2013ergodic}. In that context, we say that the action of $\mathfrak{G}$ is $\mathfrak{m}$-essentially transitive if there is a conull orbit with respect to $\mathfrak{m}$.   

\begin{prop}\label{essentiallytransitive}
The action of $G$ on $\proba$ is $\mu$-essentially transitive. 
\end{prop}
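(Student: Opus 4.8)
The plan is to produce the conull orbit explicitly, as the image of the $G$-equivariant map $\pi'$ furnished by Proposition \ref{equivariant}. First I would fix a basepoint $x_0 \in \M_1$ and set $\mathcal{O} = G \cdot \pi'(x_0) \subseteq \proba$.

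The first step is to check that $\pi'$ takes all of its values in $\mathcal{O}$. Given $x \in \M_1$, Proposition \ref{transitive} tells us that $G$ acts transitively on the set of leaves, so there is $g \in G$ with $g \cdot \FM_{x_0} = \FM_x$; since $G$ permutes leaves and $g \cdot \FM_{x_0}$ contains $gx_0$, this forces $\FM_{gx_0} = \FM_x$, i.e. $x$ and $gx_0$ lie on the same leaf. As $\pi'$ is constant on the leaves and $G$-equivariant, we get $\pi'(x) = \pi'(gx_0) = g \cdot \pi'(x_0) \in \mathcal{O}$. It is exactly here that equivariance is needed: the original map $\pi$ is only known to be constant on leaves, and its image need not be a single orbit, so it is really the equivariant representative $\pi'$ that does the job. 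Consequently $(\pi')^{-1}(\mathcal{O}) = \M_1$.

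The second step records that $\mathcal{O}$, being a single orbit of the measurable action of $G$ on the standard Borel space $\proba$, is a Borel set; this is the fact recalled just before the statement, namely corollary 2.1.20 of \cite{zimmer2013ergodic}. Since $\pi$ and $\pi'$ agree $m$-almost everywhere we have $\mu = \pi_{\ast}m = \pi'_{\ast}m$, and therefore
$$
\mu(\mathcal{O}) = m\!\left((\pi')^{-1}(\mathcal{O})\right) = m(\M_1) = 1 .
$$
Thus $\mathcal{O}$ is a conull orbit, which is precisely the assertion that the $G$-action on $(\proba,\mu)$ is $\mu$-essentially transitive.

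Once the equivariant representative $\pi'$ is in hand the argument is short, so I do not expect a genuine obstacle here; the only non-formal ingredient is the Borel measurability of a single orbit, and that is supplied by the general theory of Borel actions of locally compact second countable groups, which is in any case already invoked in the paragraph preceding the proposition.
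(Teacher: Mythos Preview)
Your proof is correct and follows essentially the same route as the paper: fix a point, use the transitivity of $G$ on leaves together with the $G$-equivariance and leaf-constancy of $\pi'$ to see that the image of $\pi'$ lies in a single $G$-orbit, then use $\pi = \pi'$ $m$-almost everywhere to conclude that this orbit has full $\mu$-measure. The only cosmetic difference is that you invoke $\mu = \pi'_{\ast}m$ directly, whereas the paper computes $\mu(G\cdot\nu) = m(\pi^{-1}(G\cdot\nu)) = m(\pi'^{-1}(G\cdot\nu))$; the content is the same.
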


\begin{proof}
Let $\pi'$ be as in proposition \ref{equivariant}, let $x$ be in $\M_1$ and denote by $\nu$ the image of $x$ by $\pi'$. Let $y \in \M_1$ and choose $g \in G$ such that $gx \in \FM_y$. Such an element exists by proposition \ref{transitive}. Since $\pi'$ is $\mathcal{A}_{sat}$-measurable and thus constant on the leaves of the $\M$-isoperiodic foliation, we have that $g\nu = \pi'(g \cdot x) = \pi'(y)$. This means that the image of $\pi'$ is contained in the $G$-orbit of $\nu$. We compute $\mu(G \cdot \nu) = m\left( \pi^{-1}(G\cdot \nu)\right) $ = $m \left( \pi'^{-1}(G\cdot \nu) \right)$ since $\pi$ and $\pi'$ are equal $m$-almost everywhere and then we deduce $\mu(G\cdot\nu) =1$. We proved that the $G$-orbit of $\nu$ is conull.
\end{proof}

\section{Proof of theorem \ref{main}}

We are now ready to give the proof of our main result. 

\begin{proof}[Proof of theorem A]

Let $\nu \in \proba$ be a point whose $G$-orbit is conull. Such a point is given by proposition \ref{essentiallytransitive} and let $x \in \M_1$ such that $\nu = \pi'(x)$. A classical consequence of Varadarajan's theorem for measurable actions on standard Borel spaces is that the stabilizer of $\nu$ is a closed subgroup $H \subset G$. See corollary 5.8 of \cite{varadarajan1968geometry}. We claim that $H$ contains the stabilizer of the leaf of $x$. Indeed, let $g \in G$ be a matrix in the stabilizer of the leaf of $x$ and since $\pi'$ is $\mathcal{A}_{sat}$-measurable, it follows $g\nu=\pi'(gx) = \pi'(x) = \nu$. Now, let $\Phi: G/H \to \proba$ be the orbit map associated to $\nu$. The space $G/H$ is a standard Borel space and $\Phi$ is an injective $G$-equivariant measurable map. By theorem A.4 in \cite{zimmer2013ergodic}, its corestriction to $\Phi(G/H)$ has a measurable inverse $\Phi^{-1}$ and we denote by $\mu_0$ the pushforward by $\Phi^{-1}$ of the restriction of $\mu$ to $G\cdot \nu$. It is a $G$-invariant probability measure and thus it is the Haar measure. In particular $H$ has finite covolume in $G$. According to Borel density theorem, as stated in Theorem 3.2.5 in \cite{zimmer2013ergodic}, this means that $H$ is either a lattice in $G$ or $G$ itself. 

\medskip 

\noindent In the first case, since $H$ contains the stabilizer of the leaf of $x$, the latter is discrete and thus the space of leaves, which is homeomorphic to the quotient of $G$ by the stabilizer of the leaf of $x$ by proposition \ref{transitive}, is haussdorff. This implies that all the leaves are closed. 

\medskip 

\noindent In the second case, when $H=G$, the $G$-orbit of $\nu$ is a singleton and $\mu$ is the dirac mass at $\nu$. Let $f: \M \to \mathbb{R}$ be a measurable function that is constant on the leaves. By proposition \ref{factors}, there is a measurable map $\mathfrak{f}: \proba \to \mathbb{R}$ such that $f = \mathfrak{f} \circ \pi$ $m$-almost everywhere. This means that $f$ is equal to $\mathfrak{f}(\nu)$ $m$-almost everywhere and thus the foliation is ergodic with respect to $m$. It is a classical fact in ergodic theory of foliations that in that case, $m$-almost every leaf is dense. It follows from proposition \ref{transitive} that all of them are. 
\end{proof}

\bibliographystyle{plain}
\bibliography{bibdeflo.bib}

\end{document}